\newcommand{\Sets}{\mathbf{Set}}
\newcommand{\Meas}{\mathbf{Meas}}
\newcommand{\inverse}[1]{{#1}^{\hspace{-0.15em} - \hspace{-0.1em}1}\hspace{-0.1em}}
\newcommand{\lrceil}[1]{\left\lceil {#1} \right\rceil}
\newcommand{\lrangle}[1]{\left\langle {#1} \right\rangle}
\newcommand{\lrAngle}[1]{\left\langle\hspace{-0.2em}\left\langle {#1} \right\rangle\hspace{-0.2em}\right\rangle}
\begin{document}
\title{The Giry Monad is not Strong for the Canonical Symmetric Monoidal Closed Structure on $\Meas$}
\author{Tetsuya Sato}
\institute{
	Research Institute for Mathematical Sciences,
	Kyoto University,
	Kyoto, 606-8502, Japan\\
	\email{satoutet@kurims.kyoto-u.ac.jp}
}
\maketitle
\begin{abstract}
We show that the Giry monad is not strong with respect to the canonical symmetric monoidal closed structure on the category $\Meas$ of all measurable spaces and measurable functions.
\end{abstract}
\section{Introduction}
A motivation of this research is to give a denotational semantics of Higher-order continuous probabilistic programming language.
The denotational semantics of discrete probabilistic programming language is categorified by using the (sub-)distribution monad on the category $\Sets$ of all sets and functions.
The categorified semantics supports the Higher-order functions since the category $\Sets$ is cartesian closed, and the (sub-)distribution monad is commutative strong with respect to the cartesian monoidal structure.
On the other hand, denotational semantics of continuous first-order probabilistic programming language is categorified by using the (sub-probabilistic) \emph{Giry monad}.
The Giry monad is a monad on the category $\Meas$ of measurable spaces and functions, which introduced by Giry to give a categorical definition of continuous random processes such as (Labelled) Markov processes in the paper \cite{Giry1982}.
The Giry monad is commutative strong with respect to the cartesian monoidal structure of $\Meas$, and hence it supports first-order semantics for continuous probabilistic language.
However, it does not support Higher-order functions because the category $\Meas$ is not cartesian closed \cite{aumann1961}.

To give a categorical semantics of higher-order continuous probabilistic programming language, 
we have to find a monoidal closed structure which supports continuous probabilistic processes/calculations.

In fact, there is a canonical \emph{symmetric monoidal closed structure} on $\Meas$ that is defined by the finest $\sigma$-algebra $\Sigma_{X \otimes Y}$ over product sets $|X|\times |Y|$ that makes all constant graph functions measurable (Section \ref{SMCC_Meas}).
If the Giry monad was strong with respect to it then we obtained a categorical semantics of higher-order continuous probabilistic programming language.
However, unfortunately, the Giry monad \emph{is not strong} with respect to the canonical symmetric monoidal closed structure.

In this paper, we prove that Giry monad is not strong with respect to the canonical symmetric monoidal closed structure as follows:
We recall that a strength of a monad with respect to a symmetric monoidal closed category corresponds bijectively to a tensorial strength \cite{Kock1972}.
We show that a tensorial strength for any monad on $\Meas$ with respect to the canonical symmetric monoidal closed structure is uniquely determined if exists (Section \ref{Uniqueness_strength}).
This implies that there is a unique candidate of the strength of Giry monad with respect to the canonical symmetric monoidal closed structure.
We give a counterexample that the candidate associates a non-measurable function to some pair of measurable spaces (Section \ref{Giry_not_Strong}).
\subsection{Preliminaries}
We refer the definitions of monads, monoidal categories, and monoidal functors from \cite{citeulike:423557}, and refer the definition of strong monads on symmetric monoidal closed categories from \cite{Kock1970,Kock1972}.
The notion of tensorial strength can be relaxed to a monad on a symmetric monoidal category.
We often call monads equipped with tensorial strengths strong monads (see \cite[Definition 3.2]{MOGGI199155} or \cite[Section 7.1]{Goubault-Larrecq2002}).

Throughout this paper, we use the category $\Meas$ of all measurable spaces and measurable functions.
The category $\Meas$ is complete and cocomplete.
Hence we enjoy the cartesian monoidal structure $(\Meas, \times, 1)$.
Moreover, it is a \emph{topological category} \cite{BRUMMER198427,HERRLICH1974125}.
We emphasise that the category $\Meas$ is \emph{not cartesian closed} because there is no $\sigma$-algebra over $\Meas([0,1],2)$ satisfying the axioms of exponential object \cite{aumann1961}.

We also introduce the following notations on measure theory:
\begin{itemize}
\item For each measurable spaces, we denote by $|X|$ and $\Sigma_X$ the underlying set and $\sigma$-algebra of $X$ respectively.
\item The indicator function $\chi_A \colon X \to \mathbb{R}$ of a subset $A$ of $X$ is defined by $\chi_A(x) = 1$ ($x \in A$) and $\chi_A(x)=0$ ($x \notin A$).
Note that $\chi_A$ is measurable if and only if the corresponding subset $A$ is measurable (i.e. $A \in \Sigma_X$).
\end{itemize}

We recall that the Borel $\sigma$-algebra $\mathcal{B}(\mathbb{R})$ over the real line $\mathbb{R}$ is generated by the family of all half-open intervals $\Set{\left[\alpha,\beta\right)| \alpha,\beta \in \mathbb{R}}$.
We remark that each singleton $\{r\}$ is a Borel set, because $\{r\} = \bigcap_{n \in \mathbb{N}} \left[r,r + 1/(n+1) \right)$, and hence any countable subset of $\mathbb{R}$ is a Borel set.
By the Caratheodory's extension theorem, there is a unique measure on $\mathcal{B}(\mathbb{R})$ assigning $m(\left[\alpha,\beta\right)) = \beta - \alpha$.
We denote it by $m$, and call it \emph{the Borel measure} on the real line $\mathbb{R}$.
We remark that $m(\{r\}) = 0$ for any $r \in \mathbb{R}$, and hence $m(A) = 0$ for any countable subset $A \subseteq \mathbb{R}$.
\subsection{The Giry monad}
%
The Giry monad \cite{Giry1982} is a monad on the category $\Meas$ that is introduced by Giry, which captures continuous/non-discrete probabilistic computations such as Labelled Markov processes.
For example, Markov processes are arrows in the Kleisli category of Giry monad, and the Chapman-Kolmogorov equation for Markov processes is characterised as associativity of multiplications of the Giry monad.

The structure of Giry monad $\mathcal{G}$ is defined as follows:
\begin{itemize}
\item For any measurable space $X$,
the measurable space $\mathcal{G}X$ is defined by
\begin{itemize}
\item the underlying set $|\mathcal{G}X|$ is \emph{the set of probability measures} on $X$.
\item the $\sigma$-algebra $\Sigma_{\mathcal{G}X}$ is the \emph{coarsest} one over $|\mathcal{G}X|$
that makes the evaluation function $\mathrm{ev}_A \colon \mathcal{G}X \to [0,1]$ defined by $\nu\mapsto\nu(A)$ measurable for any $A \in \Sigma_X$, where $\Sigma_{[0,1]}$ is the Borel $\sigma$-algebra over the unit interval, which is introduced in the same way as $\mathcal{B}(\mathbb{R})$.
\end{itemize}
\item For each $f \colon X \to Y$ in $\Meas$, $\mathcal{G}f \colon \mathcal{G}X \to \mathcal{G}Y$ is defined by $(\mathcal{G}f)(\mu) = \mu(\inverse{f}(-))$.
\item The unit $\eta$ is defined by $\eta_X (x) = \delta_{x}$,
where $\delta_{x}$ is the \emph{Dirac measure} centred on $x$. 
\item The Kleisli lifting
of $f \colon X \to \mathcal{G}Y$
is given by $f^\sharp(\mu)(A) = \int_{X} f({-})(A)~d\mu$ ($\mu \in \mathcal{G}X$).
\end{itemize}
We also consider the subprobabilistic variant $\mathcal{G}_{\mathrm{sub}}$ of the Giry monad; 
the underlying set $|\mathcal{G}_{\mathrm{sub}}X|$ is the set of \emph{subprobability} measures on $X$.

Both the Giry monad $\mathcal{G}$ and its subprobabilistic variant $\mathcal{G}_{\mathrm{sub}}$ is strong and commutative with respect to the cartesian monoidal structure on $\Meas$ in the sense of \cite{MOGGI199155}.
The tensorial strength $\mathrm{st}^{\mathcal{G}{\times}}_{-,=} \colon ({-})\times \mathcal{G}({=}) \Rightarrow \mathcal{G}({-} \times {=})$ is given by the product measure $\mathrm{st}_{X,Y}(x,\nu) = \delta_x \times \nu$.
The commutativity is shown from the Fubini theorem, and the double strength $\mathrm{dst}^{\mathcal{G}{\times}}_{-,=} \colon \mathcal{G}({-})\times \mathcal{G}({=}) \Rightarrow \mathcal{G}({-} \times {=})$ is given by $\mathrm{dst}_{X,Y}(\nu_1,\nu_2) = \nu_1 \times \nu_2$.
\section{The Canonical Symmetric Monoidal Closed Structure on $\Meas$}\label{SMCC_Meas}
The category $\Meas$ is not cartesian closed, but there is the following canonical symmetric monoidal \emph{closed} structure on $\Meas$ (see also \cite{Culbertson_2012, 2016arXiv160102593S}).
We first consider the following two families of constant graph functions:
\begin{itemize}
\item  $\Gamma_{x} \colon |Y| \to  |X| \times |Y|$ defined by $\Gamma_{x}(y) = (x,y)$ for any $y \in |Y|$ ($x \in |X|$).
\item  $\Gamma_{y} \colon |X| \to  |X| \times |Y|$ defined by $\Gamma_{y}(x) = (x,y)$ for any $x \in |X|$ ($y \in |Y|$).
\end{itemize}
Next, we introduce the following symmetric monoidal closed structure on $\Meas$:
\begin{itemize}
\item
The monoidal product functor $\otimes$ is defined by $X \otimes Y = (|X| \times |Y|, \Sigma_{X \otimes Y})$ where the $\sigma$-algebra $\Sigma_{X \otimes Y}$ is the finest $\sigma$-algebra $\Sigma$ such that 
\begin{itemize}
\item $\Gamma_{x}$ is a measurable mapping $Y \to  (|X| \times |Y|, \Sigma)$ for any $x \in X$, and 
\item $\Gamma_{y}$ is a measurable mapping $X \to  (|X| \times |Y|, \Sigma)$ for any $y \in Y$,
\end{itemize}
\item
The internal Hom functor $\multimap$ is defined by $(X \multimap Y) = (\Meas(X,Y),\Sigma_{X \multimap Y})$ where the $\sigma$-algebra $\Sigma_{X \multimap Y}$ is the coarsest one generated by 
\[
\lrAngle{x, U} =\Set{f \in \Meas(X,Y) | f(x) \in U} \quad (x \in |X|, U \in \Sigma_Y).
\]
\end{itemize}
We remark that the forgetful functor $|-|\colon \Meas \to \Sets$ forms a strict symmetric monoidal functor from $(\Meas, \otimes, 1)$ to $(\Sets, \times, 1)$.
\begin{lemma}
The currying operation forms a natural isomorphism
$\Meas(X \otimes Y,Z) \simeq \Meas(X, Y \multimap Z)$ for all measurable spaces $X,Y,Z$.
\end{lemma}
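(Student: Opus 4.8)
The plan is to establish the adjunction by hand: first fix the underlying set-level currying bijection, then check that it restricts to a bijection between \emph{measurable} maps on both sides, using the defining universal properties of $\Sigma_{X\otimes Y}$ (as the finest $\sigma$-algebra making all $\Gamma_x$, $\Gamma_y$ measurable) and of $\Sigma_{Y\multimap Z}$ (as the $\sigma$-algebra generated by the sets $\lrAngle{y,U}$). For a function $f\colon |X|\times|Y|\to|Z|$ write $\Lambda f\colon |X|\to(|Y|\to|Z|)$ for its curry, $(\Lambda f)(x)(y)=f(x,y)$; this is the familiar natural bijection in $\Sets$. Since $|-|\colon\Meas\to\Sets$ is faithful and strict monoidal, once the bijection is shown to land in the measurable maps in both directions, naturality in $X$, $Y$, $Z$ descends for free from the $\Sets$-level naturality, so the real content is the two measurability checks.

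For the forward direction I would assume $f\colon X\otimes Y\to Z$ is measurable. For each $x\in|X|$ one has $(\Lambda f)(x)=f\circ\Gamma_x$, and $\Gamma_x\colon Y\to X\otimes Y$ is measurable by the very definition of $\Sigma_{X\otimes Y}$; hence $(\Lambda f)(x)\in\Meas(Y,Z)$, so $\Lambda f$ genuinely takes values in $|Y\multimap Z|$. To see $\Lambda f$ is measurable it suffices to check preimages of generators: $(\Lambda f)^{-1}(\lrAngle{y,U})=\{x\mid f(x,y)\in U\}=(f\circ\Gamma_y)^{-1}(U)$, and $f\circ\Gamma_y\colon X\to Z$ is measurable because $\Gamma_y\colon X\to X\otimes Y$ is measurable (again by definition of $\Sigma_{X\otimes Y}$). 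Thus $\Lambda f\in\Meas(X,Y\multimap Z)$.

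For the converse I would assume $g\colon X\to(Y\multimap Z)$ is measurable and set $f=\Lambda^{-1}g$, i.e. $f(x,y)=g(x)(y)$. Here I invoke the universal property of the finest $\sigma$-algebra: a map out of $X\otimes Y$ is measurable iff all its composites with the $\Gamma_x$ and the $\Gamma_y$ are measurable. Now $f\circ\Gamma_x=g(x)\in\Meas(Y,Z)$, so these are measurable; and for $f\circ\Gamma_y\colon X\to Z$ we compute $(f\circ\Gamma_y)^{-1}(U)=\{x\mid g(x)(y)\in U\}=g^{-1}(\lrAngle{y,U})\in\Sigma_X$ for every $U\in\Sigma_Z$, so $f\circ\Gamma_y$ is measurable. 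Hence $f\colon X\otimes Y\to Z$ is measurable, and $\Lambda$ is the desired bijection.

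The main obstacle, such as it is, is making precise and correctly applying the universal property of the ``finest $\sigma$-algebra'' in the converse step: that $\Meas$, being topological over $\Sets$, has final lifts, and that a map out of a final structure is measurable exactly when every composite with the defining cone is. The remaining points --- reducing measurability of $\Lambda f$ and of $g$ to preimages of generators of $\Sigma_{Y\multimap Z}$, and transporting naturality along the faithful functor $|-|$ --- are routine.
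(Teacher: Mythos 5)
Your proof is correct and follows essentially the same route as the paper: both arguments reduce measurability of $f$ on $X\otimes Y$ to measurability of the composites $f\circ\Gamma_x$ and $f\circ\Gamma_y$ (via the finest-$\sigma$-algebra / final-lift property) and reduce measurability of $\lrceil{f}$ to preimages of the generators $\lrAngle{y,U}$, with naturality inherited from $\Sets$. The paper merely packages the two directions you prove separately into a single chain of equivalences.
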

\begin{proof}
Let $f$ be an arbitrary function of type $|X|\times |Y| \to |Z|$.
The curried function $\lrceil{f}$ is then a function of type $|X| \to \Sets(|Y|, |Z|)$.
The currying operator $\lrceil{-}$ is obviously natural and isomorphic as a transformation on just functions.
Hence, it suffices to show that the original $f$ is measurable if and only if the curried $\lrceil{f}$ returns measurable functions, and is measurable itself.
\begin{align*}
\lefteqn{f \in \Meas(X \otimes Y,Z)}\\
&\iff \forall{V \in \Sigma_Z}. \inverse{f}(V) \in \Sigma_{X \otimes Y}\\
&\iff \forall{V \in \Sigma_Z}.\left(
(\forall{x \in X}.\inverse{\Gamma_x}(\inverse{f}(V))\in\Sigma_Y)\wedge
(\forall{y \in Y}.\inverse{\Gamma_y}(\inverse{f}(V))\in\Sigma_X)\right)\\
&\iff \forall{V \in \Sigma_Z}.\left(
(\forall{x \in X}.\inverse{(\lrceil{f}(x))}(V)\in\Sigma_Y)\wedge
(\forall{y \in Y}.\inverse{\lrceil{f}}\lrAngle{y,V}\in\Sigma_X)\right)\\
&\iff
(\forall{x \in X}.\lrceil{f}(x) \in \Meas(Y,Z)) \wedge
(\forall{V \in \Sigma_Z}.\forall{y \in Y}.\inverse{\lrceil{f}}\lrAngle{y,V} \in\Sigma_X)\\
&\iff \lrceil{f} \in \Meas(X,Y\multimap Z)
\end{align*}
\end{proof}
We remark that the uncurried mapping of the identity mapping $\mathrm{id}_{X \multimap Y} \colon X \multimap Y \to X \multimap Y$ on $X \multimap Y$ is called the evaluation mapping $\mathrm{ev}_{X,Y} \colon (X \multimap Y) \otimes X \to Y$.

This construction is similar to the classical symmetric monoidal closed structure $(\mathbf{Top},\otimes,1)$ on the category $\mathbf{Top}$ of topological spaces and continuous functions (see \cite[Example 6.1.9.g]{opac-b1126838}, \cite[Section 3]{Wischnewsky1974}, and \cite[Remark 6.4]{kelly2009}).

By \cite[Proposition 3.1]{kelly2009}, any symmetric monoidal closed structure on $\Meas$ is isomorphic to a symmetric monoidal closed structure $(\mathbin{\dot{\otimes}}, 1,\mathbin{\dot{\multimap}})$ such that the forgetful functor $|-|\colon \Meas \to \Sets$ is strictly symmetric monoidal functor from $(\Meas, \mathbin{\dot{\otimes}}, 1)$ to $(\Sets, \times, 1)$, and each internal Hom $X \mathbin{\dot{\multimap}} Y$ is a measurable space $(\Meas(X,Y), \Sigma_{X \mathbin{\dot{\multimap}} Y})$ for some $\sigma$-algebra $\Sigma_{X \mathbin{\dot{\multimap}} Y}$ over $\Meas(X,Y)$.

Consider an arbitrary symmetric monoidal closed structure $(\mathbin{\dot{\otimes}}, 1,\mathbin{\dot{\multimap}})$ on $\Meas$ in the above ``normal form''.
Then $\Gamma_x = (\overline{x} \mathbin{\dot{\otimes}} Y) \circ \inverse{\lambda_Y}$ and $\Gamma_y = (X \mathbin{\dot{\otimes}} \overline{y}) \circ  \inverse{\rho_X}$, where $\overline{x} \colon 1 \to X$ and $\overline{y} \colon 1 \to Y$ are the elements $\ast \mapsto x$ and $\ast \mapsto y$ respectively.
Here $\lambda_X \colon I \mathbin{\dot{\otimes}} X \cong X$ and $\rho_X \colon X \mathbin{\dot{\otimes}} I \cong X$ is the left and right unitors in the symmetric monoidal closed structure $(\mathbin{\dot{\otimes}}, 1,\mathbin{\dot{\multimap}})$ respectively.
Therefore, these graph functions $\Gamma_x$ and $\Gamma_y$ are measurable.
This implies that the identity function on $|X|\times|Y|$ forms a measurable function $X \otimes Y \to X \mathbin{\dot{\otimes}} Y$ because the $\sigma$-algebra $\Sigma_{X \mathbin{\dot{\otimes}} Y}$ is coarser than $\Sigma_{X \otimes Y}$.
Also the identity function on $|X|\times|Y|$ forms a measurable function $X \mathbin{\dot{\otimes}} Y \to X \times Y$ because $\pi_1 = \rho_X \circ (X \mathbin{\dot{\otimes}} !_Y)$ and $\pi_2 = \lambda_Y \circ (!_X \mathbin{\dot{\otimes}} Y)$.
\section{The Uniqueness of Tensorial Strength}\label{Uniqueness_strength}
We first show the uniqueness of tensorial strength for any monad on $\Meas$ with respect to the canonical symmetric monoidal closed structure $(\otimes,1,\multimap)$ on $\Meas$.
We slightly relax the uniqueness of tensorial strength in \cite{MOGGI199155} as below.

Let $\mathbb{C}$ be a category.
An object $I$ in $\mathbb{C}$ is called a \emph{separator} if for any pair of arrows $f,g \colon X \to Y$ in $\mathbb{C}$, the equality $f = g$ holds when $f \circ e = g \circ e$ for each  $e \colon I \to X$.
\begin{lemma}[{\cite[Proposition 3.4]{MOGGI199155}, Modified}]\label{lemma:uniqueness_strength}
Consider a symmetric monoidal category $(\mathbb{C}, \otimes, I)$ whose tensor unit $I$ is a separator of $\mathbb{C}$ such that for any morphism $\overline{z} \colon I \to X \otimes Y$, there are $\overline{x} \colon I \to X$ and $\overline{y} \colon I \to Y$ satisfying $\overline{z} = (\overline{x} \otimes \overline{y}) \circ \inverse{\lambda_I}$.

If $T$ is a strong monad with respect to $(\mathbb{C}, \otimes, I)$ then its tensorial strength $\mathrm{st}^T \colon ({-})\otimes T({=}) \Rightarrow T({-} \otimes{ =})$ is determined uniquely by
\[
\mathrm{st}^T_{X,Y} \circ (\overline{x} \otimes \overline{\xi}) \circ \inverse{\lambda_I} =  T((\overline{x} \otimes Y)\circ\inverse{\lambda_{Y}}) \circ \overline{\xi}
\] 
where $\overline{x} \colon I \to X$ and $\overline{\xi} \colon I \to TY$.
\end{lemma}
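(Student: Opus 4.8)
The plan is to use the separator hypothesis on $I$ to reduce the uniqueness claim to the behaviour of $\mathrm{st}^T$ on the ``points'' $I \to X \otimes TY$, and then to evaluate such points explicitly using only the naturality and coherence axioms that every tensorial strength satisfies.

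First I would note that, since $I$ is a separator, two parallel arrows $X \otimes TY \to T(X \otimes Y)$ agree as soon as they agree after precomposition with every $\overline{z} \colon I \to X \otimes TY$. So it is enough to show that $\mathrm{st}^T_{X,Y} \circ \overline{z}$ is forced, for each such $\overline{z}$, by the functor $T$ alone. By the decomposition hypothesis on $(\mathbb{C},\otimes,I)$, any such $\overline{z}$ can be written as $(\overline{x} \otimes \overline{\xi}) \circ \inverse{\lambda_I}$ with $\overline{x} \colon I \to X$ and $\overline{\xi} \colon I \to TY$, and therefore everything reduces to computing $\mathrm{st}^T_{X,Y} \circ (\overline{x} \otimes \overline{\xi}) \circ \inverse{\lambda_I}$.

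Next I would carry out that computation as a short chain of rewrites. Factoring $\overline{x} \otimes \overline{\xi} = (\overline{x} \otimes TY) \circ (I \otimes \overline{\xi})$, naturality of $\mathrm{st}^T$ in its first argument along $\overline{x} \colon I \to X$ turns $\mathrm{st}^T_{X,Y} \circ (\overline{x} \otimes TY)$ into $T(\overline{x} \otimes Y) \circ \mathrm{st}^T_{I,Y}$; naturality of the left unitor $\lambda$ rewrites $(I \otimes \overline{\xi}) \circ \inverse{\lambda_I}$ as $\inverse{\lambda_{TY}} \circ \overline{\xi}$; and the unit coherence axiom $T(\lambda_Y) \circ \mathrm{st}^T_{I,Y} = \lambda_{TY}$ (together with functoriality of $T$) gives $\mathrm{st}^T_{I,Y} \circ \inverse{\lambda_{TY}} = T(\inverse{\lambda_Y})$. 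Composing these identities yields $\mathrm{st}^T_{X,Y} \circ (\overline{x} \otimes \overline{\xi}) \circ \inverse{\lambda_I} = T((\overline{x} \otimes Y) \circ \inverse{\lambda_Y}) \circ \overline{\xi}$, which is the asserted formula, and whose right-hand side refers only to $T$ as a functor. Hence any two tensorial strengths on $T$ with respect to $(\mathbb{C},\otimes,I)$ coincide on all points of every $X \otimes TY$, and so are equal.

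I do not expect a genuine obstacle here; the one place that needs a little attention is that $\overline{\xi} \colon I \to TY$ is not of the form $Tg$, so naturality of $\mathrm{st}^T$ in the second variable cannot be applied to it directly. The remedy is to move $\overline{\xi}$ out through the unitor rather than through the strength, which is exactly what the unit coherence axiom licenses; getting this bookkeeping right is the only delicate part of writing the argument.
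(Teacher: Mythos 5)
Your proposal is correct and follows essentially the same route as the paper: reduce via the separator and the point-decomposition hypothesis to computing $\mathrm{st}^T_{X,Y} \circ (\overline{x} \otimes \overline{\xi}) \circ \inverse{\lambda_I}$, then factor $\overline{x} \otimes \overline{\xi} = (\overline{x} \otimes TY) \circ (I \otimes \overline{\xi})$ and apply naturality of $\mathrm{st}^T$ in the first variable, naturality of $\lambda$, and the unit coherence axiom. Your explicit citation of $T(\lambda_Y) \circ \mathrm{st}^T_{I,Y} = \lambda_{TY}$ makes the last step slightly more transparent than the paper's wording, which leaves that axiom implicit.
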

\begin{proof}
From the naturality of $\mathrm{st}^T$ and $\lambda$ and bifunctoriality of $\otimes$, 
we obtain, 
\begin{align*}
\mathrm{st}^T_{X,Y} \circ (\overline{x} \otimes \overline{\xi}) \circ \inverse{\lambda_I}
&= \mathrm{st}^T_{X,Y} \circ (\overline{x} \otimes TY) \circ (I \otimes \overline{\xi}) \circ \inverse{\lambda_I}\\
&= T(\overline{x} \otimes Y) \circ \mathrm{st}^T_{I,Y} \circ \inverse{\lambda_{TY}} \circ \overline{\xi} \circ \lambda_{I} \circ \inverse{\lambda_I}\\
&= T((\overline{x} \otimes Y)\circ \inverse{\lambda_Y}) \circ \overline{\xi}
\end{align*}
for any pair $\overline{x} \colon I \to X$ and $\overline{\xi} \colon I \to TY$.
Since any arrow $\overline{z} \colon I \to X \otimes TY$ is written as $\overline{z} = (\overline{x} \otimes \overline{\xi}) \circ \inverse{\lambda_I}$ for some $\overline{x} \colon I \to X$ and $\overline{\xi} \colon I \to TY$, the arrow $\mathrm{st}^T_{X,Y}$ is determined uniquely for each $X$ and $Y$.
\end{proof}
Any well-pointed cartesian monoidal category $(\mathbb{C}, \times, 1)$ satisfies the assumption of this lemma because the terminal object $1$ is a generator and $\lambda_X = \pi_2$ and $\inverse{\lambda_X} = \lrangle{!_X, \mathrm{id}}$.

The symmetric monoidal closed categories $(\Meas,\otimes,1,\multimap)$ and $(\mathbf{Top},\otimes,1,\multimap)$
discussed in Section \ref{SMCC_Meas} satisfy the assumption of the lemma because the terminal object $1$ is both tensor unit and generator, and each element $\overline{z} \colon 1 \to X \otimes Y$ ($\ast \mapsto (x,y)$) is obviously decomposed into a pair of elements $\overline{x} \colon 1 \to X$ ($\ast \mapsto x$) and $\overline{y} \colon 1 \to Y$ ($\ast \mapsto y$).
\section{The Giry Monad is not Strong}\label{Giry_not_Strong}
We show that the Giry monad $\mathcal{G}$ is not strong with respect to the canonical symmetric monoidal closed structure $(\otimes,1,\multimap)$ on $\Meas$.
In the following discussion, we consider the Giry monad $\mathcal{G}$, but we are able to prove that the subprobabilistic variant $\mathcal{G}_{\mathrm{sub}}$ is not strong in the same way.
\begin{theorem}
Giry monad $\mathcal{G}$ is not strong with respect to 
the canonical symmetric monoidal closed structure $(\otimes,1,\multimap)$ on $\Meas$.
\end{theorem}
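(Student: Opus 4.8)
The plan is to use Lemma~\ref{lemma:uniqueness_strength} to cut the problem down to a single concrete measurability question and then answer it negatively. First I would pin down the only possible strength. Since $\Gamma_x = (\overline{x}\otimes Y)\circ\inverse{\lambda_Y}$ (Section~\ref{SMCC_Meas}), Lemma~\ref{lemma:uniqueness_strength} applied to $T=\mathcal{G}$ says that the underlying function of any tensorial strength $\mathrm{st}\colon(-)\otimes\mathcal{G}(=)\Rightarrow\mathcal{G}((-)\otimes(=))$ is forced to be $\mathrm{st}_{X,Y}(x,\nu)=\mathcal{G}(\Gamma_x)(\nu)$, the pushforward of $\nu$ along the constant-graph map $\Gamma_x\colon Y\to X\otimes Y$. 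So it suffices to find measurable spaces $X$, $Y$ for which this assignment is not a morphism $X\otimes\mathcal{G}Y\to\mathcal{G}(X\otimes Y)$ of $\Meas$.

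Second I would reduce non-measurability of $\mathrm{st}_{X,Y}$ to a statement about sections. Fix a probability measure $\nu$ on $Y$. The map $\Gamma_\nu\colon X\to X\otimes\mathcal{G}Y$, $x\mapsto(x,\nu)$, is measurable by definition of $\otimes$, so if $\mathrm{st}_{X,Y}$ were measurable then $x\mapsto\mathcal{G}(\Gamma_x)(\nu)$ would be measurable $X\to\mathcal{G}(X\otimes Y)$, and composing with each generator $\mathrm{ev}_A$ ($A\in\Sigma_{X\otimes Y}$) of $\Sigma_{\mathcal{G}(X\otimes Y)}$, the function $x\mapsto\mathcal{G}(\Gamma_x)(\nu)(A)=\nu(\inverse{\Gamma_x}(A))=\nu(A_x)$ would be $\Sigma_X$-measurable, where $A_x=\{\,y\mid(x,y)\in A\,\}$. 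Hence it is enough to exhibit $X$, $Y$, a probability measure $\nu$ on $Y$, and a set $A\in\Sigma_{X\otimes Y}$ for which $x\mapsto\nu(A_x)$ is not $\Sigma_X$-measurable.

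For the counterexample I would take $Y=[0,1]$ with its Borel $\sigma$-algebra, $\nu=m$ the Borel probability measure, $2=(\{0,1\},\mathcal{P}(\{0,1\}))$, and $X=[0,1]\multimap 2$, whose points are the measurable functions $f\colon[0,1]\to\{0,1\}$ and whose $\sigma$-algebra is generated by the evaluation sets $\lrAngle{y,\{1\}}=\{\,f\mid f(y)=1\,\}$, $y\in[0,1]$. Put $A=\{\,(f,y)\mid f(y)=1\,\}\subseteq|X|\times[0,1]$. Every row $A_f=\inverse{f}(\{1\})$ is Borel and every column $A^y=\lrAngle{y,\{1\}}$ lies in $\Sigma_X$, so $A\in\Sigma_{X\otimes Y}$; and $f\mapsto\nu(A_f)=m(\inverse{f}(\{1\}))$ is, under the identification of $f$ with the Borel set $\inverse{f}(\{1\})$, just the measure function $\mathcal{B}([0,1])\to[0,1]$. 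It remains to show this is not $\Sigma_X$-measurable. Since $\Sigma_X$ is generated by $\{\mathrm{ev}_y\}_{y\in[0,1]}$, every $\Sigma_X$-measurable real-valued function is already measurable with respect to the sub-$\sigma$-algebra generated by countably many of the $\mathrm{ev}_y$, hence constant on the fibres of $f\mapsto(f(y))_{y\in C}$ for some countable $C\subseteq[0,1]$; in particular $m(\inverse{f}(\{1\}))$ would depend only on $(f(y))_{y\in C}$. This fails: for a countable set $C$, the Borel sets $[0,\tfrac13]\setminus C$ and $[0,\tfrac23]\setminus C$ carry identical evaluation data on $C$ yet have measures $\tfrac13$ and $\tfrac23$. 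This contradiction shows $\mathrm{st}_{X,Y}$ is not measurable, so $\mathcal{G}$ admits no tensorial strength; running the same computation with $m$ regarded as a subprobability measure rules out a strength for $\mathcal{G}_{\mathrm{sub}}$ as well.

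The step I expect to require the key idea, rather than routine bookkeeping, is the choice of spaces. The naive attempt — hunting inside a square $[0,1]^2$ for a set with all horizontal and vertical sections Borel whose horizontal-section-measure function is non-measurable — is a Sierpi\'nski-type construction entangled with extra set-theoretic hypotheses. The point of passing to the internal hom $[0,1]\multimap 2$ is that its $\sigma$-algebra is intrinsically ``determined by countably many point evaluations'', so the plain measure map already witnesses non-measurability and the whole argument stays within ZFC.
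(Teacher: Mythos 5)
Your proof is correct, and it rests on the same two essential ingredients as the paper's: Lemma~\ref{lemma:uniqueness_strength} forcing $\mathrm{st}^{\mathcal{G}}_{X,Y}(x,\nu)=\nu(\inverse{\Gamma_x}(-))$, and the observation that every $\Sigma_{X\multimap Y}$-measurable set or function is determined by the values $f(y)$ for $y$ ranging over some countable set, whereas the measure of $\inverse{f}(U)$ is unchanged by modifying $f$ on a countable (hence null) set. Where you genuinely differ is in the reduction and the witnessing spaces. The paper converts the hypothetical tensorial strength into the Kock-style strength $\mathrm{G}_{X,Y}=\lrceil{\mathcal{G}(\mathrm{ev}_{X,Y})\circ\mathrm{st}^{\mathcal{G}}_{X\multimap Y,X}}\colon(X\multimap Y)\to(\mathcal{G}X\multimap\mathcal{G}Y)$, identifies it with $f\mapsto\mathcal{G}f$, and shows for $X=Y=(\mathbb{R},\mathcal{B}(\mathbb{R}))$ that the preimage of the generator $\lrAngle{\mu,\inverse{\mathrm{ev}_{\{0\}}}(\{1\})}$, i.e.\ the set of $f$ with $\mu(\inverse{f}(\{0\}))=1$, lies outside $\Sigma_{X\multimap Y}$, using the auxiliary $\sigma$-algebra $\Omega_{X,Y}$ of countably determined sets. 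You instead test the tensorial strength directly: freezing $\nu=m$, precomposing with the (automatically measurable) graph map $x\mapsto(x,\nu)$, and postcomposing with $\mathrm{ev}_A$ for the evaluation set $A\subseteq|[0,1]\multimap 2|\times[0,1]$ reduces everything to the non-measurability of the section-measure function $f\mapsto m(\inverse{f}(\{1\}))$ on $[0,1]\multimap 2$, which you dispatch with the same countable-determination principle. Your route avoids invoking the strength/tensorial-strength correspondence and buys a particularly clean endgame, since with codomain $2$ the points of the hom-space are indicators of Borel sets and the perturbation $[0,\tfrac13]\setminus C$ versus $[0,\tfrac23]\setminus C$ is immediate; the cost is that the internal hom enters as the first tensor factor rather than as the domain of $\mathrm{G}_{X,Y}$. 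Both arguments exhibit the same underlying phenomenon, and yours is a valid, self-contained alternative (and, as you note, it transfers verbatim to $\mathcal{G}_{\mathrm{sub}}$).
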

Assume that the Giry monad $\mathcal{G}$ is strong with respect to the symmetric monoidal structure $(\Meas,\otimes,\multimap,1)$.
From \cite{Kock1972}, the strength $\mathrm{G}_{X,Y} \colon (X \multimap Y) \to (\mathcal{G}X \multimap \mathcal{G}Y)$ correspond bijectively to the tensorial strength $\mathrm{st}^{\mathcal{G}}_{X,Y} \colon X \otimes \mathcal{G}Y \to \mathcal{G}(X \otimes Y)$.
From the construction of $(\otimes,1,\multimap)$, we obtain $\mathrm{G}_{X,Y} = \lrceil{\mathcal{G}(\mathrm{ev}_{X,Y}) \circ \mathrm{st}^{\mathcal{G}}_{X \multimap Y,X}}$.
By Lemma \ref{lemma:uniqueness_strength}, the tensorial strength $\mathrm{st}^{\mathcal{G}}$ of $\mathcal{G}$ is determined uniquely by for any $x \in X$ and $\mu \in \mathcal{G}Y$, 
\[
\mathrm{st}^{\mathcal{G}}_{X,Y}(x,\mu)
= \mathcal{G}({(\overline{x} \otimes Y) \circ \inverse{\lambda_Y}}) \circ \overline{\mu}
= \mu(\inverse{((\overline{x} \otimes Y) \circ \inverse{\lambda_Y})}(-))
= \mu(\inverse{\Gamma_{x}}(-)).
\]
Hence, the following calculation shows that the strength $\mathrm{G}_{X,Y}$ is uniquely determined by the mapping that takes $f \colon X \to Y$, and returns $\mathcal{G}f \colon \mathcal{G}X \to \mathcal{G}Y$:
\[
\mathcal{G}(\mathrm{ev}_{X,Y}) \circ \mathrm{st}^{\mathcal{G}}_{X \multimap Y,X} (f,\mu)
= \mu(\inverse{\mathrm{ev}_{X,Y} \circ \Gamma_{f}}(-))
= \mu(\inverse{f}(-))
= \mathcal{G}(f)(\mu).
\]
However, as we show below, the component $\mathrm{G}_{X,Y}$ is not even a measurable function of type $(X \multimap Y) \to (\mathcal{G}X \multimap \mathcal{G}Y)$ for some $X$ and $Y$.
Hence, the Giry monad is not strong with respect to the canonical symmetric monoidal structure.
\subsection{Non-measurability of $\mathrm{G}_{X,Y}$}\label{Giry_not_Meas_functor}

We recall that $\Sigma_{\mathcal{G}X \multimap \mathcal{G}Y}$ is generated by
$\lrAngle{\mu,\inverse{\mathrm{ev}_U}(A)}$ for parameters $\mu \in \mathcal{G}X$, $U \in \Sigma_{Y}$, and $A \in \Sigma_{[0,1]}$.
We thus have, 
\[
\inverse{\mathrm{G}_{X,Y}}\lrAngle{\mu,\inverse{\mathrm{ev}_U}(A)}
= \Set{ f \in \Meas(X,Y)| \mu(\inverse{f}(U)) \in A}.
\]
Hence, the $\sigma$-algebra $\Set{\inverse{\mathrm{G}_{X,Y}}(K) | K \in \Sigma_{\mathcal{G}X \multimap \mathcal{G}Y}}$ of the inverse images of measurable subsets of $\mathcal{G}X \multimap \mathcal{G}Y$ along $\mathrm{G}_{X,Y}$ is at least finer than or equal to $\Sigma_{X \multimap Y}$, because for any $x \in X$ and $U \in \Sigma Y$, we obtain
\[
\lrAngle{ x, U } = \Set{ f \in \Meas(X,Y) |x\in \inverse{f}(U)} = \inverse{\mathrm{G}_{X,Y}} \lrAngle{ \delta_x, \inverse{\mathrm{ev}_U}(\{1\})}.
\]
\subsubsection{A $\sigma$-algebra $\Omega_{X,Y}$}
Consider measurable spaces $X$ and $Y$ whose underlying sets are infinite.
We define the family $\Omega_{X,Y}$ of all subsets of the form
\[
\lrangle{h, V} = \Set{f \in \Meas(X,Y) | \lrangle{ f(h(n)) }_{n \in \mathbb{N}} \in V} 
\]
where $h \colon \mathbb{N} \to X$ is an arbitrary injection and $V \subseteq {|Y|}^{\mathbb{N}}$ is an arbitrary subset.
\begin{lemma}
The collection $\Omega_{X,Y}$ forms a $\sigma$-algebra over $\Meas(X,Y)$ finer than $\Sigma_{X \multimap Y}$.
\end{lemma}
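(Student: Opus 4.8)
The plan is to establish the two assertions separately: that $\Omega_{X,Y}$ is a $\sigma$-algebra on $\Meas(X,Y)$, and that it refines $\Sigma_{X \multimap Y}$.

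The organizing observation for the first part is that $\Omega_{X,Y} = \bigcup_h \mathcal{A}_h$, where $h$ ranges over all injections $\mathbb{N} \to X$ and $\mathcal{A}_h = \Set{\lrangle{h,V} | V \subseteq {|Y|}^{\mathbb{N}}}$ is the $\sigma$-algebra on $\Meas(X,Y)$ obtained by pulling back the full powerset $\sigma$-algebra of ${|Y|}^{\mathbb{N}}$ along the ``restriction along $h$'' map $e_h \colon f \mapsto \lrangle{f(h(n))}_{n \in \mathbb{N}}$. For a \emph{fixed} $h$, closure under complements and countable unions is immediate, since $\inverse{e_h}$ commutes with Boolean operations; in particular $\Meas(X,Y) = \lrangle{h, {|Y|}^{\mathbb{N}}}$ and $\Meas(X,Y) \setminus \lrangle{h,V} = \lrangle{h, {|Y|}^{\mathbb{N}} \setminus V}$. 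So the only real work is to show that $\bigcup_h \mathcal{A}_h$ is closed under \emph{countable} unions of members that may come from different $\mathcal{A}_h$'s.

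For this, given $\lrangle{h_i, V_i}$ with $i \in \mathbb{N}$, I would set $S = \bigcup_{i} h_i(\mathbb{N}) \subseteq |X|$, a countable set, and — using that $|X|$ is infinite — choose an injection $h \colon \mathbb{N} \to X$ whose image contains $S$ (first enlarging $S$ to a countably infinite subset of $|X|$ if it happens to be finite). Each $h_i$ then factors uniquely as $h \circ \pi_i$ with $\pi_i \colon \mathbb{N} \to \mathbb{N}$ an injection (injectivity of $\pi_i$ follows from that of $h_i$ and $h$), so $e_{h_i}(f) = e_h(f) \circ \pi_i$. Writing $r_i \colon {|Y|}^{\mathbb{N}} \to {|Y|}^{\mathbb{N}}$ for the reindexing $s \mapsto s \circ \pi_i$, we get $\lrangle{h_i, V_i} = \lrangle{h, \inverse{r_i}(V_i)}$, hence $\bigcup_i \lrangle{h_i, V_i} = \lrangle{h, \bigcup_i \inverse{r_i}(V_i)} \in \mathcal{A}_h \subseteq \Omega_{X,Y}$. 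Together with the fixed-$h$ remarks above, this shows $\Omega_{X,Y}$ is a $\sigma$-algebra.

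Finally, to see that $\Omega_{X,Y}$ is finer than $\Sigma_{X \multimap Y}$, it suffices to check that every generator $\lrAngle{x, U}$ (with $x \in |X|$, $U \in \Sigma_Y$) lies in $\Omega_{X,Y}$: pick an injection $h \colon \mathbb{N} \to X$ with $h(0) = x$ and take $V = \Set{s \in {|Y|}^{\mathbb{N}} | s(0) \in U}$, so that $\lrangle{h,V} = \Set{f \in \Meas(X,Y) | f(x) \in U} = \lrAngle{x,U}$. Being a $\sigma$-algebra containing all these generators, $\Omega_{X,Y}$ contains $\Sigma_{X \multimap Y}$. The one genuinely non-routine point is the countable-union step — essentially the two facts that a countable union of countable sets is countable and that infiniteness of $|X|$ always allows covering such a set by the image of an injection $\mathbb{N} \to X$; everything else is formal manipulation of the reindexing maps $\pi_i$ and $r_i$.
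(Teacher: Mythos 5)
Your proof is correct and follows essentially the same route as the paper: both reduce a countable union of sets $\lrangle{h_m,V_m}$ with varying injections to a single common injection $h$ covering all the images, and transport each $V_m$ along the induced reindexing map before taking the union in ${|Y|}^{\mathbb{N}}$; the generator check $\lrAngle{x,U}=\lrangle{h,\inverse{\pi_0}(U)}$ is also identical. (Your precaution about $S$ being finite is unnecessary, since each $h_i$ is injective and so $S$ is automatically countably infinite, but this is harmless.)
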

\begin{proof}
We have $\emptyset = \lrangle{ h, \emptyset } \in \Omega_{X,Y}$ where $h$ is an arbitrary injection.
For any $\lrangle{ h, V }\in \Omega_{X,Y}$, we have $\Meas(X,Y) \setminus \lrangle{ h, V }  =  \lrangle{ h,  {|Y|}^{\mathbb{N}} \setminus V } \in \Omega_{X,Y}$.
For any countable family $\{\lrangle{ h_m, V_m }\}_{m \in \mathbb{N}}$ with $\lrangle{ h_m, V_m } \in \Omega_{X,Y}$, we obtain $\bigcup_{m \in \mathbb{N}} \lrangle{ h_m, V_m } = \lrangle{ h, V }$ in the following steps:
\begin{enumerate}
\item The image $I = \Set{h_m(n)| m,n \in \mathbb{N}}$ is countably infinite, hence there is a bijection $k \colon \mathbb{N} \to I$.
Now we define $k_m = \inverse{k} \circ h_m$ for each $m \in \mathbb{N}$ and $h = \iota \circ k$ where $\iota \colon I \rightharpoonup X$ is the inclusion.
Since $(k \circ k_m)(n) = h_m(n)$ for all $m , n \in \mathbb{N}$, the injection $h$ and the family $\{k_m\}_{m\in\mathbb{N}}$ satisfy $h \circ k_m = h_m$ for each $m \in \mathbb{N}$.
\item We take the projection $\pi_l \colon {|Y|}^{\mathbb{N}} \to Y$ ($\lrangle{x_L}_{L\in\mathbb{N}} \mapsto x_l$) for each $l \in\mathbb{N}$ and the tuple $\lrangle{\pi_{k_m(n)}}_{n\in\mathbb{N}} \colon {|Y|}^{\mathbb{N}} \to {|Y|}^{\mathbb{N}}$ indexed by $\{k_m(n)\}_{n \in \mathbb{N}}$ for each $m \in \mathbb{N}$.
Then the inverse image $W_m = \inverse{\lrangle{\pi_{k_m(n)}}_{n\in\mathbb{N}}}(V_m)$ satisfies $\lrangle{h_m,V_m} = \lrangle{h, W_m}$ for each $m \in \mathbb{N}$.
\item We have $\bigcup_{m \in \mathbb{N}} \lrangle{ h_m, V_m } = \bigcup_{m \in \mathbb{N}} \lrangle{ h, W_m } =  \lrangle{ h, \bigcup_{m \in \mathbb{N}}\! W_m }$ (thus $V = \bigcup_{m \in \mathbb{N}}\! W_m $).
\end{enumerate}
Hence, $\Omega_{X,Y}$ is indeed a $\sigma$-algebra over $\Meas(X,Y)$.

For each $x \in X$ and $U \in \Sigma_Y$, we have $\lrAngle{ x, U } = \lrangle{ h, \inverse{\pi_0}(U) }$ where $h$ is an arbitrary injection such that $h(0) = x$.
From the minimality of  $\Sigma_{X \multimap Y}$, the $\sigma$-algebra $\Omega_{X,Y}$ is finer than $\Sigma_{X\multimap Y}$.
\end{proof}
The inclusion $\Sigma_{X \multimap Y} \subseteq \Omega_{X,Y}$ implies that 
for each measurable set $K \in \Sigma_{X \multimap Y}$, the membership $f \in K$ is determined by checking outputs $f(x_0), f(x_1), \ldots$ for some \emph{countable sequence} $x_0, x_1, \ldots$ of inputs.
\subsubsection{A Counterexample}
\begin{theorem}
Let $X = Y = (\mathbb{R},\mathcal{B}(\mathbb{R}))$, and let $\mu \in \mathcal{G}X$ be absolutely continuous with respect to the Borel measure $m$ (i.e. $m(A) = 0 \implies \mu(A) = 0$ for any $A \in \Sigma_{X}$).
We then obtain $\inverse{\mathrm{G}_{X,Y}} \lrAngle{ \mu, \inverse{\mathrm{ev}_{\{0\}}}(\{1\}) } \notin \Sigma_{X \multimap Y}$.
\end{theorem}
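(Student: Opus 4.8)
The plan is to identify the set in question explicitly and then appeal to the inclusion $\Sigma_{X \multimap Y} \subseteq \Omega_{X,Y}$ from the preceding lemma. First I would unfold the definitions: by the formula $\inverse{\mathrm{G}_{X,Y}}\lrAngle{\mu,\inverse{\mathrm{ev}_U}(A)} = \Set{f \in \Meas(X,Y) | \mu(\inverse{f}(U)) \in A}$ recorded above, taking $U = \{0\}$ and $A = \{1\}$ yields
\[
K := \inverse{\mathrm{G}_{X,Y}}\lrAngle{\mu,\inverse{\mathrm{ev}_{\{0\}}}(\{1\})}
= \Set{f \in \Meas(\mathbb{R},\mathbb{R}) | \mu(\inverse{f}(\{0\})) = 1},
\]
which (since $\mu$ is a probability measure) is exactly the set of Borel functions $\mathbb{R} \to \mathbb{R}$ vanishing $\mu$-almost everywhere. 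Because $\Sigma_{X \multimap Y} \subseteq \Omega_{X,Y}$, it is enough to prove $K \notin \Omega_{X,Y}$; equivalently, that membership in $K$ cannot be decided by inspecting the values of $f$ along any single countable sequence of test points.

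Next I would argue by contradiction. Suppose $K = \lrangle{h,V}$ for some injection $h \colon \mathbb{N} \to \mathbb{R}$ and some $V \subseteq {\mathbb{R}}^{\mathbb{N}}$, and set $S = \Set{h(n) | n \in \mathbb{N}}$. This $S$ is countable, hence a Borel set with $m(S) = 0$, and therefore $\mu(S) = 0$ by absolute continuity of $\mu$ with respect to $m$. The key step is to exhibit two Borel functions that agree on $S$ yet lie on opposite sides of $K$: take $f_1 \equiv 0$, so that $\mu(\inverse{f_1}(\{0\})) = \mu(\mathbb{R}) = 1$ and hence $f_1 \in K$; and take $f_2 = \chi_{\mathbb{R} \setminus S}$, which is measurable because $\mathbb{R} \setminus S$ is Borel. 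Since $\inverse{f_2}(\{0\}) = S$, we get $\mu(\inverse{f_2}(\{0\})) = \mu(S) = 0 \neq 1$, so $f_2 \notin K$. But $f_1(h(n)) = 0 = f_2(h(n))$ for every $n \in \mathbb{N}$, so $\lrangle{f_1(h(n))}_{n \in \mathbb{N}} = \lrangle{f_2(h(n))}_{n \in \mathbb{N}}$; as membership in $\lrangle{h,V}$ depends only on this sequence, $f_1 \in \lrangle{h,V}$ would force $f_2 \in \lrangle{h,V}$, contradicting $K = \lrangle{h,V}$. Hence $K \notin \Omega_{X,Y}$, and a fortiori $K \notin \Sigma_{X \multimap Y}$.

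I do not anticipate a genuine obstacle: the substantive content is already in the preceding lemma, which pins every measurable subset of $X \multimap Y$ down to countably many test inputs. The only points demanding care are that the two witnessing functions be honestly Borel measurable — using that countable subsets of $\mathbb{R}$ are Borel — and that the perturbation taking $f_1$ to $f_2$ be invisible to $\mu$, which is precisely where absolute continuity is used, forcing the countable ``correction set'' $S$ to be $\mu$-null. Together with the earlier observation that $\Set{\inverse{\mathrm{G}_{X,Y}}(L) | L \in \Sigma_{\mathcal{G}X \multimap \mathcal{G}Y}}$ already contains $\Sigma_{X \multimap Y}$, this shows the preimage $\sigma$-algebra is strictly finer than $\Sigma_{X \multimap Y}$, so $\mathrm{G}_{X,Y}$ is not measurable; the unique candidate strength therefore fails to be a morphism of $\Meas$, and $\mathcal{G}$ is not strong.
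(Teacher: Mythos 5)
Your proof is correct, and it rests on the same key lemma as the paper's (the inclusion $\Sigma_{X \multimap Y} \subseteq \Omega_{X,Y}$, so that it suffices to rule out $K = \lrangle{h,V}$), but the contradiction is derived in the mirror-image direction, with different witnesses. The paper starts from an arbitrary $f \in \Meas(X,Y)$ and perturbs it \emph{on} the countable test set $S = \Set{h(n) | n \in \mathbb{N}}$, producing $f_1, f_2$ whose traces along $h$ are a chosen sequence in $U$ and a chosen sequence outside $U$; since the perturbation is supported on a $\mu$-null set, $K$ cannot distinguish $f_1$ from $f_2$ while $\lrangle{h,U}$ does. You instead fix the trace on $S$ (both your witnesses vanish there) and change the function \emph{off} $S$, taking $f_1 \equiv 0$ and $f_2 = \chi_{\mathbb{R}\setminus S}$, so that $\lrangle{h,V}$ cannot distinguish them while $K$ does, since $\mu(\inverse{f_1}(\{0\})) = 1$ but $\mu(\inverse{f_2}(\{0\})) = \mu(S) = 0$. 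Your version is a little more economical: it needs no preliminary observation that $V$ is a proper nonempty subset of $\mathbb{R}^{\mathbb{N}}$, no choice of sequences $\lrangle{s_n}, \lrangle{t_n}$, and no countable-sum construction to verify measurability of the perturbed functions, only that countable sets are Borel and $m$-null. What it gives up is a point the paper's argument makes implicitly: the paper shows that \emph{every} measurable $f$ can be nudged into and out of any candidate $\lrangle{h,U}$ without $K$ noticing, which localizes the failure at every function rather than at one explicit pair. Both arguments are complete proofs of the stated theorem.
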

\begin{proof}
We write $K = \inverse{\mathrm{G}_{X,Y}} \lrAngle{ \mu, \inverse{\mathrm{ev}_{\{0\}}}(\{1\})}$.
We assume $K = \lrangle{ h, U} \in \Omega_{X,Y}$ holds for some $h$ and $U$.
We then have $U \neq \mathbb{R}^{\mathbb{N}}, \emptyset$ because $K$ is neither the whole space $\Meas(X,Y)$ nor the empty function space.
Hence, there is a pair of sequences $\lrangle{ s_n }_{n \in \mathbb{N}} \in \mathbb{R}^{\mathbb{N}}$ and $\lrangle{ t_n }_{n \in \mathbb{N}} \in \mathbb{R}^{\mathbb{N}}$ such that $\lrangle{ s_n }_{n \in \mathbb{N}} \in U$ and $\lrangle{ t_n }_{n \in \mathbb{N}} \notin U$.


We consider a measurable function  $f \in \Meas(X,Y)$.
We give the functions $f_1$ and $f_2$ by replacing the output of $f$ at each $h(n)$ to $s_n$ and $t_n$ ($n \in \mathbb{N}$) respectively, that is, 
\[
f_1 = f + \sum_{n \in \mathbb{N}}(s_n - f(h(n))) \cdot \chi_{\{h(n)\}}, \qquad
f_2 = f + \sum_{n \in \mathbb{N}}(t_n - f(h(n))) \cdot \chi_{\{h(n)\}}.
\]
Here, $\chi_{\{h(n)\}}$ is the indicator function of the closed (hence Borel) subset $\{h(n)\}$, and hence it is measurable for each $n \in \mathbb{N}$.
Since $\Meas(X,Y)$ is the set of Borel measurable functions on $\mathbb{R}$, and hence it is closed under scalar multiplication and countable addition, the functions $f_1$ and $f_2$ are measurable.
We obtain $f_1 \in K$ and $f_2 \notin K$ since $\lrangle{ s_n }_{n \in \mathbb{N}} \in U$ and $\lrangle{ t_n }_{n \in \mathbb{N}} \notin U$.
However, $f_1 \in K \iff f_2 \in K$ must hold, because $\mu(\Set{ x  |f(x) \neq f_1(x)}) = \mu(\Set{ x  |f(x) \neq f_2(x)}) = 0$ is obtained from the absolute continuity of $\mu$ with respect to the Borel measure $m$.
This is a contradiction.
Hence, there is no $h$ and $U$ such that $K = \lrangle{ h, U} \in \Omega_{X,Y}$.
From the construction of $\Omega_{X,Y}$, $K \notin \Omega_{X,Y}$.
Thus, $K \notin \Sigma_{X \multimap Y}$.

\end{proof}
\section{Concluding Remarks}
The proof that the Giry monad is strong with respect to the canonical symmetric monoidal closed structure $(\otimes,1,\multimap)$ in the preprint \cite{2016arXiv160102593S} has the following error:
The statement of \cite[Theorem 3.1]{2016arXiv160102593S} is just the naturality of $\mathrm{st}^{\mathcal{G}{\times}}_{X,Y} \circ \mathrm{id_{|X|\times|Y|}} \colon X \otimes \mathcal{G}Y \to  \mathcal{G}(X \times Y)$. Here we remark $\mathrm{st}^{\mathcal{G}{\times}}_{X,Y}$ is the tensorial strength for $\mathcal{G}$ with respect to the cartesian product on $\Meas$, and $\mathrm{id_{|X|\times|Y|}}$ obviously forms a symmetric monoidal natural transformation $X \otimes Y \to X \times Y$.
However, the above statement is mistaken for the existence of the natural transformation of the type $X\otimes \mathcal{G}Y \to \mathcal{G}(X \otimes Y)$ in the proof of existence of the tensorial strength for the Giry monad $\mathcal{G}$ with respect to the canonical symmetric monoidal closed structure $(\otimes,1,\multimap)$.

If there is a symmetric monoidal closed structure $(\dot{\otimes},1,\dot{\multimap})$ on $\Meas$ with respect to which makes the Giry monad strong, then there is a strong symmetric monoidal functor $U$ from $(\dot\otimes,1,\dot{\multimap})$ to the canonical symmetric monoidal closed structure $(\otimes,1,\multimap)$.
Moreover, by converting to the ``normal form'' discussed in the last two paragraphs of Section \ref{SMCC_Meas}, the $\sigma$-algebra $\Sigma_{X  \dot{\otimes} Y}$ of the space $X \dot{\otimes} Y$ satisfies $\Sigma_{X \times Y} \subsetneq \Sigma_{X  \dot{\otimes} Y} \subsetneq \Sigma_{X  \otimes Y}$.
We have not found yet an intermediate symmetric monoidal closed structure on $\Meas$ which is intermediate between the cartesian monoidal structure and the canonical symmetric monoidal closed structure.
\section*{Acknowledgement}
The author thanks Shin-ya Katsumata for constructive discussions and stimulating suggestions and Masahito Hasegawa for advice that contributed to improving the writing of this paper.
\bibliographystyle{plain}
\bibliography{reference}
\end{document}